\newtheorem{theorem}{Theorem}[section]
\newtheorem{prop}{Proposition}[section]
\theoremstyle{definition}
\newtheorem{defn}{Definition}[section]
\theoremstyle{remark}
\newtheorem{remark}{Remark}[section]
\numberwithin{equation}{section}
\let\@fnsymbol\@arabic
\title{On a class of Time-fractional Continuous-state Branching Processes}
\author{Luisa Andreis\thanks{Dipartimento di Matematica ``T. L. Civita'', Universit\`a degli Studi di Padova, via Trieste 63, 35121 Padova (Italy);
		e-mail addresses: andreis@math.unipd.it} 
		\and Federico Polito\thanks{Dipartimento di Matematica ``G.\ Peano", Universit\`a degli Studi di Torino, via Carlo Alberto 10,
		10123 Torino (Italy); e-mail address: $\{$federico.polito, laura.sacerdote$\}$@unito.it}  \and Laura Sacerdote\footnotemark[2]}
\begin{document}

	\maketitle

	\begin{abstract}

		\noindent 
		
		We propose a class of non-Markov population models with continuous or discrete state space
		via a limiting procedure involving sequences of rescaled and randomly time-changed Galton--Watson processes.
		The class includes as specific cases the classical continuous-state branching processes and Markov branching processes.
		Several results such as the expressions of moments and the branching inequality governing the evolution of the
		process are presented and commented. The generalized Feller branching
		diffusion and the fractional Yule process are analyzed in detail as special cases of the general model.  
		
		\vspace{0.1cm}

		\noindent \emph{Keywords:} Continuous-state Branching Processes, Time-change, Subordinators.
		
	\end{abstract}

	\section{Introduction} 
		
		Since the seminal paper of Galton-Watson \cite{watson1875probability}, branching structures are subject to intensive theoretical
		and applied researches. The most studied applications of branching phenomena concern population growth models.
		In this context, in 1958, M.\ Ji{\v{r}}ina \cite{Jir58} introduced the so-called continuous-state branching processes (shortly CSBPs)
		that represent a general class of linear branching processes in which jumps of any finite size and a continuous state space are permitted
		(see also \cite{Li10} and the references therein). The original definition of CSBPs is very similar to that of
		L\'evy processes (with which they are linked by means of a random time change, the Lamperti transform). However, an
		alternative definition, dating back to the work of J.\ Lamperti \cite{Lam67} considers CSBPs as limit processes of sequences
		of rescaled Galton--Watson processes (GWPs in the following) or Markov branching processes
		(see also \cite{AlSh83,Gri74} for further references).
		Due to their simple definition, generalizations of the GWPs and CSBPs  have arisen in several directions, leading for example
		to the introduction of population-size-dependent GWPs and CSBPs~\cite{Kleb84, Li06},
		and controlled branching processes~\cite{SeZu74}, where the independence of individuals' reproduction is modified allowing
		dependence on the size of the current population. In this paper we aim to extend the definition of GWPs and CSBPs in
		a different direction. Indeed, the Markov property characterizing these processes, although is mathematically appealing,
		determines a limitation for their actual application; furthermore, non-Markov branching processes would present interesting
		mathematical properties that constitute a reason of study by itself. 
		Here we introduce a general class of non-Markov population models characterized by persistent memory and contructed by means
		of a limiting procedure on a sequence of suitably rescaled Galton--Watson processes time-changed by a specific random process.
		In order to clarify our approach, we briefly recall how time-changes play a fundamental role in the definition of models
		for anomalous diffusion. We will take inspiration from them.
		Roughly speaking, the basic framework is the following. Take a standard Brownian motion,
		say $\{B(t), t \ge 0\}$, and an independent stable subordinator $d=\{D(t), t\ge 0\}$,
		that is a spectrally positive increasing L\'evy process with stable unilateral probability density function.
		Define the inverse process to $D$ as
		\begin{equation*}
			\mathcal{E}(t):=\inf\{u>0 \colon D(u)>t\}, \qquad t \ge 0.
		\end{equation*}
		Then, the time-changed process $\{B(\mathcal{E}(t)), t \ge 0\}$ is a non-Markov process with continuous sample paths and exhibiting
		a sub-diffusive behaviour. Furthermore, if $\mathbf{P}(B(\mathcal{E}(t)) \in dx)/dx = l(x,t)$ is the marginal probability density
		function of the time-changed Brownian motion, then $l(x,t)$ solves the fractional PDE
		\begin{align*}
			\partial_t^\beta l(x,t) = \frac{1}{2} \frac{\partial^2}{\partial x^2}l(x,t), \qquad t \ge 0, \: x \in \mathbb{R}, \: \beta \in (0,1).
		\end{align*}
		The above operator acting on time is a non-local integro-differential operator called D\v{z}rba\v{s}jan--Caputo derivative
		(see Section \ref{back} for prerequisites and specific information) and $\beta$ is the stability parameter. The main consequence
		of the presence of the fractional derivative is that, due to non-locality, it furnishes the model with a long memory.
		
		Hence, in this paper we build via a limiting procedure and specific time-changes a large class of processes with branching structure
		also exhibiting non-locality and long memory. This is actually carried out in Section~\ref{violin}. Specific cases of interest
		being part of this class are,
		amongst others, the generalized Feller branching diffusion and the fractional Yule process.

		Due to the nature of the considered problem, the paper fits exactly in-between two classical topics of probability,
		namely population models (processes exhibiting a branching structure) and models for anomalous diffusion
		(frequently associated to fractional diffusion).

		The paper is organized as follows: in Section~\ref{back} we introduce the notation and recall the basic definitions and properties
		that we use in the sequel; in Section~\ref{Limite} we define the time-changed processes both in the discrete and the continuous
		setting and we prove the scaling limit; in Section~\ref{sec_properties} we focus on the time-changed CSBPs with the proof of some
		properties and some examples.

	\section{Backgrounds}\label{back}
		
		The aim of this section is to give a brief overview of the processes we are interested in. We recall the definition and some basic
		properties of GWPs and of CSBPs; in particular the branching property is of
		fundamental importance. Moreover, basic information on fractional calculus and fractional diffusion is also recalled.

		\subsection{From GWPs to CSBPs}

			GWPs are classical discrete-time branching processes, where each individual of a population reproduces
			independently and according to the same offspring distribution $p$, see \cite{AtNe72} for a complete introduction.
			Rigorously, given a probability measure $p$ on $\mathbb{N}$, a GWP $\{Z_n\}_{n\geq0}$ with offspring distribution $p$
			is the Markov chain such that, for all $n\geq 0$,
			\begin{equation*}
				Z_{n+1}\stackrel{d}{=}\sum_{i=1}^{Z_n}\xi_i,
			\end{equation*} 
			where $\xi_i$ are i.i.d.\ random variables with common distribution $p$. Let us indicate with $m=\sum_{k=0}^\infty k p(k)$
			the first moment of the distribution of the offspring. It classifies GWPs into three classes: subcritical if $m<1$,
			supercritical if $m>1$ and critical if $m=1$. The following characteristic feature of GWPs is the branching property.
			Let us call $_{(j)}Z$ the GWP starting with $j$ individuals, i.e.\ $_{(j)}Z_0=j$ almost surely.
			Then the GWP is the only discrete-time and discrete-space Markov process such that for all $j,k\geq 0$,
			\begin{equation}
				\label{branch_property}
				_{(j+k)}Z\stackrel{d}{=} {_{(j)}{Z}^{(1)}}+ {_{(k)}{Z}^{(2)}},
			\end{equation}
			where $Z$, $Z^{(1)}$ and $Z^{(2)}$ are independent GWPs with the same offspring distribution. From a modelling point of view,
			this property underlines the fact that each individual in the population reproduces independently from the others according
			to the same offspring distribution $p$.
			
			Since the seminal works of Ji\v{r}ina and Lamperti \cite{Jir58, Lam67CSBP, Lam67}, there has been interest in defining branching processes
			in a continuous state-space setting and in identifying them as scaling limits of GWPs.
			The simplest way to extend the definition of branching processes to describe the evolution in continuous time of a population
			with values in $\mathbb{R}^+$ is by means of the branching property.
			Indeed, we define the CSBPs as the continuous time-continuous space processes satisfying
			an analogue of the branching property \eqref{branch_property} as follows.
			Rigorously, a stochastic process $X=\{X(t):t\geq 0\}$ is a CSBP if it is a Markov process characterized by a family of transition kernels
			$\{P_t(x,dy), \, t\geq0, \, x \in  \mathbb{R}^+\}$ satisfying, for all $t>0$ and $x,x'\in \mathbb{R}^+$ (see e.g.\ \cite{Lam67CSBP}),
			\begin{equation*}
				P_t(x,\cdot)*P_t(x',\cdot)=P_t(x+x',\cdot).
			\end{equation*}
			Let $\mathbb{D}(\mathbb{R}^+)$ be the set of c\`adl\`ag functions  defined on $\mathbb{R}^+$ with
			values on $\mathbb{R}^+$, a CSBP is a random variable in  $\mathbb{D}(\mathbb{R}^+)$. From now on we will consider
			$\mathbb{D}(\mathbb{R}^+)$ as a topological space endowed with the usual Skorokhod topology. For a complete description see \cite{JaSh13}.
			Further, we denote by $\mathbf{E}_x$ the expectation with respect to the law of the process $X$ starting from the initial value
			$x\in\mathbb{R}^+$. Let us underline that CSBPs are characterized by their Laplace transform, i.e. for all
			$\lambda>0$ we have
			\begin{equation*}
				\mathbf{E}_x\left[e^{-\lambda X(t)}\right]=
				\int_{0}^{\infty}e^{-\lambda y}P_t(x,dy)=e^{-x \nu_t(\lambda)},
			\end{equation*}
			where $\nu_t(\lambda)$ is the unique nonnegative solution to the equation
			\begin{equation}
				\label{exponent}
				\nu_t(\lambda)+\int_0^t\psi(\nu_s(\lambda))ds=\lambda.
			\end{equation}
			Here $\psi$ can be written as
			\begin{equation*}
				\psi(u)=bu+cu^2+\int (e^{-zu}-1+zu)m(dz),
			\end{equation*}
			where $b\in\mathbb{R}$, $c\geq0$ and $m$ is a $\sigma$-finite measure on $(0,\infty)$ such that $\int(z\wedge z^2)m(dz)<\infty$.
			The function $\psi$ is called the branching mechanism of the CSBP and, at the same time, it is the characteristic function of a L\'evy
			process without negative jumps killed at the first time it becomes negative.
			This identifies a relationship between CSBPs and the latter class of L\'evy processes that is known as
			Lamperti transform. Indeed, also the converse property holds true, i.e.\ the characteristic function $\psi$ of every L\'evy process
			without negative jumps and killed at zero is the branching mechanism of a CSBP (see  \cite{LeG99, Sil68}).
			The branching mechanism $\psi$, in addition to the Lamperti transform, plays a role in classifying CSBPs in three categories: critical,
			subcritical and supercritical processes. A CSBP is supercritical when $b<0$, critical when $b=0$ and subcritical when $b>0$. Moreover, in
			\cite{Lam67} we see that the parameters of $\psi$ appear in the explicit form of the first two moments of a CSBP $X$, that is
			\begin{align}
				\label{momenti}
				\mathbf{E}_x[X(t)] & = x e^{-bt}, \\
				\mathbf{E}_x[X(t)^2] & =
				\begin{cases}
					x^2 + x \tilde{\beta} t, & b=0, \\
					x^2e^{-2bt}-\frac{\tilde{\beta}x}{b}\left( e^{-2bt}-e^{-bt}\right), & b\neq0, \notag
				\end{cases}
			\end{align}
			where $\tilde{\beta}=\left(2c + \int_0^{\infty} u^2 m(du)\right )$.
			Let us mention that, despite CSBPs in general have
			discontinuous sample paths, the Feller branching diffusion (introduced in  \cite{Fel51}) which is a CSBP whose branching
			mechanism has the form $\psi(u)=bu+cu^2$, exhibits continuous sample paths.

			Results on convergence of suitably rescaled sequences of GWPs to CSBPs appeared first in \cite{Lam67} and, subsequently,
			in several other papers such as \cite{AlSh83, Gri74, Li10}. In the following we briefly state the results and the approach.
			Consider a sequence of GWPs 
			\begin{equation*}
				Z^{(k)}=\{Z^{(k)}_{n}\}_{n \in\mathbb{N}}, \qquad k= 1,2,3,\dots,
			\end{equation*}
			defined through their offspring distribution $p^{(k)}$. Define a sequence of positive integers $\{c_k\}_{k\in\mathbb{N}}$,
			tending to infinity, and  the Markov process 
			\begin{equation}
				\label{succlimite}
				\{X_k(t)\}_{t\geq 0}=\left \{ \frac{Z^{(k)}_{\lfloor kt\rfloor}}{c_k} \right \}_{t\geq0}, \qquad
				Z^{(k)}_0=c_k\, \quad a.s.,
			\end{equation}
			where for each $y$~$\in$~$\mathbb{R}$ we denote with $\lfloor y \rfloor$ its integer part.
			If the sequence of processes $\{X_k\}_{k\geq0}$ has a weak limit in the sense of finite-dimensional distributions,
			then this limit is a CSBP. This result is extended to convergence in the Skorokhod space $\mathbb{D}(\mathbb{R}^+)$ in \cite{Gri74}.
			Briefly, let $\mu_k$ be the probability measure on $\left\{-1/c_k,0,1/c_k,2/c_k,\dots\right\}$
			defined as follows: for all $n\in\mathbb{N}$,
			\[
				\mu_k\left(\frac{n-1}{c_k}\right)=p^{(k)}(n),
			\]
			and assume that there exists
			a measure $\mu$ such that $(\mu_k)^{*k c_k}\rightarrow \mu$, weakly as $k\rightarrow\infty$. Then the sequence of GWPs $Z^{(k)}$
			with offspring distribution $p^{(k)}$ and normalized as in \eqref{succlimite}, has a weak limit as a sequence of random variables on
			$\mathbb{D}(\mathbb{R}^+)$; this limit, say $X$, is a CSBP with initial condition $X(0)=1$ almost surely.
			Conversely, for every CSBP $X$ there exists a sequence of GWPs $\{Z^{(k)}\}_{k\in\mathbb{N}}$ and a sequence of positive
			integers $\{c_k\}_{k\in\mathbb{N}}$ such that $X$ is the limit of the sequence rescaled as in \eqref{succlimite}.

		\subsection{Random times and stable subordinators}\label{waiting_times}

			Let us consider a sequence i.i.d.\ real positive random variables
			$J_1,J_2,\dots$ representing for us a sequence of random waiting times.
			We define for all $n\geq 0$ the process $T_n\colon = \sum_{i=1}^nJ_i$.
			Its inverse, for all $t\geq 0$, is the renewal process
			\begin{equation}
				\label{N_t}
				N_t\colon =\max \{n\geq0 \colon  T_n\leq t\}.
			\end{equation}
			We assume now that these waiting times belong to the strict domain of attraction of a certain
			completely skewed stable random variable $D$
			with stability parameter $\beta\in(0,1)$. Note that due to the extended central limit theorem there exists a sequence
			$\{b_n\}_{n\geq0}$ such that the following convergence holds in distribution \cite{MeSi12}:
			\begin{equation*}
				b_n T_n \Rightarrow D.
			\end{equation*}
			As a consequence, the rescaled process $\left\{b_nT_{\lfloor nt \rfloor}\right\}_{t\geq0}$ converges in
			$\mathbb{D}(\mathbb{R}^+)$ to the stable subordinator $\{D(t)\}_{t\geq0}$ of parameter $\beta$, i.e.\ a L\'evy process such
			that $D(t)\stackrel{d}{=}t^{1/\beta}D$ for all $t\geq0$ and with Laplace transform 
			\begin{align*}
				\mathbf{E}[e^{-sD(t)}]=e^{-s^{\beta}t}, \qquad s>0.
			\end{align*} 
			Similarly, the scaling limit for the renewal process $\{N_t\}_{t\geq0}$ is the hitting time process of $\{D(t)\}_{t\geq0}$,
			that we define below. Indeed, let $\{\tilde{b}_n\}_{n\geq0}$ be a regularly varying sequence with index $\beta$ such that
			$\lim_{n\rightarrow\infty}n b_{\lfloor\tilde{b}_n\rfloor}=1$, then the following limit holds:
			\begin{equation}
				\label{N->E}
				\left\{\frac{N_{nt}}{\tilde b_n}\right\}_{t\geq0}\Rightarrow  \{\mathcal{E}(t)\}_{t\geq0},
			\end{equation}  
			where the process  $\{\mathcal{E}(t)\}_{t\geq0}$ is known as the inverse $\beta$-stable subordinator, defined as
			\begin{equation*}
				\mathcal{E}(t):=\inf\{u>0 \colon D(u)>t\}, \qquad t \ge 0.
			\end{equation*}
			The process $\{\mathcal{E}(t)\}_{t\geq 0}$ is a non-Markov process with non-decreasing continuous sample paths
			and plays a role in models of phenomena exhibiting long memory; for instance $\mathcal{E}$ has a fundamental
			importance in the study of time-fractional sub-diffusions
			\cite{MeSt13}. Let us now denote by $h(u,t)$ the probability density function of the random variable $\mathcal{E}(t)$ for a fixed
			time $t\geq0$. It is known that the Laplace transform of $h(u,t)$ w.r.t.\ variable $t$ is
			\begin{equation}
				\label{laplace_h}
				\mathcal{L}(h(u,t))(s)=s^{\beta-1}e^{-us^{\beta}}, \qquad s>0.
			\end{equation}
			Furthermore, the Laplace transform w.r.t.\ variable $u$ reads
			\begin{equation*}
				\mathbf{E}[e^{-\lambda \mathcal{E}(t)}] = E_\beta(-t\lambda^\beta), \qquad \lambda > 0,
			\end{equation*}
			where $E_\nu(z)$ is the Mittag--Leffler function defined as the convergent series
			\begin{align}\label{def_mittag_leffl}
				E_\nu(x) = \sum_{r=0}^\infty \frac{x^r}{\Gamma(r \nu+1)}, \qquad x \in \mathbb{R}, \: \nu>0.
			\end{align}
			Moreover, the dynamic of this process is driven by a fractional evolution, i.e.\ $h(u,t)$ evolves according a governing equation
			involving a fractional derivative in the $t$ variable and a first order derivative in the $u$ variable. This means that $h(u,t)$,
			for all $t\geq0$ and $u>0$, solves the fractional PDE
			\begin{equation*} 
				\partial_t^{\beta}h(u,t)=-\partial_u h(u,t),
			\end{equation*}
			where $\partial_t^{\beta}$ stands for the D\v{z}rba\v{s}jan--Caputo fractional derivative of order $\beta$ which is defined as
			follows:
			\begin{defn}[D\v{z}rba\v{s}jan--Caputo derivative]
				\label{capu}
			    Let $\alpha>0$, $m = \lceil \alpha \rceil$, and $f \in AC^m(0,b)$.
			    The D\v{z}rba\v{s}jan--Caputo derivative of order $\alpha>0$ is defined as
			    \begin{equation}
			        \label{Capu}
			        \partial_t^{\alpha} f(t)= \frac{1%
			        }{\Gamma(m-\alpha)}\int_a^{t}(t-s)^{m-1-\alpha}\frac{\textup{d}^m}{\textup{d}s^m}f(s) \, \textup{d}s.
			    \end{equation}
			\end{defn}

	\section{Time fractional branching processes}\label{Limite}

		Following the approach used in \cite{MeSc04} to define time-fractional diffusions, we introduce in this section a time-changed
		GWP and we prove that there exists a certain scaling such that its limit is exactly a time-changed CSBP.

		\subsection{Time-changed GWPs}\label{GW-modif}

			Let us consider a GWP $Z$, we want to define a GWP with random waiting times between successive generations.
			Further, let $\{J_1,J_2,\dots\}$ be a sequence of i.i.d.\ random variables. The time-changed GWP is defined as 
			\begin{equation}
				\label{time_changedGW}
				\mathcal{Z}_t\colon = Z_{N_t},
			\end{equation}
			for all $t\geq0$, where $N_t$, independent of $Z$, is the renewal process defined in \eqref{N_t}.
			As soon as the waiting times $\{J_1,J_2,\dots\}$ are not exponentially distributed, the process $\{\mathcal{Z}_t\}_{t\geq0}$
			is not a Markov process anymore. The following property holds.

			\begin{prop}[Branching inequality]
				\label{lennovvo}
				We have, for all $j,$ $k\in\mathbb{N}$ and all $\lambda\geq 0$,
				\begin{equation}
					\label{Dis_branch_discrete}
					\mathbf{E}_{j+k}\left[ e^{-\lambda \mathcal{Z}_t}\right]\geq \mathbf{E}_{j}\left[
					e^{-\lambda \mathcal{Z}_t}\right]\mathbf{E}_{k}\left[ e^{-\lambda \mathcal{Z}_t}\right].
				\end{equation}
			\end{prop}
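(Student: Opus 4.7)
The plan is to condition on the renewal process $N_t$ and use the branching property of the underlying GWP $Z$ to reduce the claim to a classical correlation inequality for two monotone functions of the same random variable.

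First, since $\{N_t\}_{t \geq 0}$ is by construction independent of $Z$, conditioning on $N_t$ gives
\begin{equation*}
\mathbf{E}_m\bigl[e^{-\lambda \mathcal{Z}_t}\bigr] = \mathbf{E}\bigl[f_m(N_t)\bigr], \qquad m \in \mathbb{N},
\end{equation*}
where $f_m(n) := \mathbf{E}_m[e^{-\lambda Z_n}]$. The branching property \eqref{branch_property}, applied pointwise in $n$, yields $f_{j+k}(n) = f_j(n)\, f_k(n)$, and iterating from $m=1$ gives $f_m(n) = f_1(n)^m$ for every $m,n \in \mathbb{N}$ (with $f_0 \equiv 1$).

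Setting $Y := f_1(N_t)$, the desired inequality \eqref{Dis_branch_discrete} thus reduces to
\begin{equation*}
\mathbf{E}[Y^{j+k}] \;\geq\; \mathbf{E}[Y^j]\cdot \mathbf{E}[Y^k].
\end{equation*}
Because $Y$ takes values in $(0,1]$ (being the Laplace transform of a non-negative integer-valued random variable at $\lambda \geq 0$) and both $y \mapsto y^j$ and $y \mapsto y^k$ are non-decreasing on $[0,1]$, the classical Chebyshev covariance inequality applies: letting $Y'$ be an independent copy of $Y$, the product $\bigl(Y^j - (Y')^j\bigr)\bigl(Y^k - (Y')^k\bigr)$ is almost surely non-negative, and taking expectations yields $\mathrm{Cov}(Y^j,Y^k) \geq 0$, which is precisely the required bound.

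The step requiring most care is conceptual rather than technical: one has to notice that, once averaging over $N_t$ is performed, the branching \emph{equality} $f_{j+k}(n) = f_j(n)\,f_k(n)$ only survives as an \emph{inequality}, and that its direction is dictated by Chebyshev's inequality applied to two monotone functions of the common variable $f_1(N_t)$. No information on the distribution of the waiting times $J_i$ beyond their independence from $Z$ is needed for this statement, which is why the result takes the form of an inequality rather than the equality characteristic of genuine branching processes.
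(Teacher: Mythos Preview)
Your proof is correct and follows essentially the same route as the paper: condition on $N_t$, use the branching property of $Z$ to write the conditional Laplace transforms as $f_{N_t}(e^{-\lambda})^m$ (your $f_1(N_t)^m$), and recognize the difference as $\mathrm{Cov}(Y^j,Y^k)$ for $Y=f_1(N_t)$. The only difference is that you spell out the Chebyshev covariance inequality via the independent-copy argument, whereas the paper simply asserts that positive powers of the same random variable are positively correlated.
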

			\begin{proof}
				Let us consider the function
				\begin{align}
					K_{j,k}(t)=\mathbf{E}_{j+k}\left[ e^{-\lambda \mathcal{Z}_t}\right]- \mathbf{E}_{j}\left[ e^{-\lambda \mathcal{Z}_t}\right]
					\mathbf{E}_{k}\left[ e^{-\lambda \mathcal{Z}_t}\right].
				\end{align}
				By taking conditional expectation with respect to $N(t)$, we get
				\begin{align}
					\label{eq2}
					K_{j,k}(t)= {} & \mathbf{E}\left[\mathbf{E}_{j+k}\left[\left. e^{-\lambda Z_{N(t)}}\right | N(t)\right] \right] \\
					& -\mathbf{E}\left[\mathbf{E}_{j}\left( \left.e^{-\lambda Z_{N(t)}}
					\right | N(t) \right)\right]\mathbf{E}\left[\mathbf{E}_{k}\left(
					\left.e^{-\lambda Z_{N(t)}} \right | N(t) \right) \right]. \notag
				\end{align}
				Observe that $\mathbf{E}_{x}\left[ \left.e^{-\lambda Z_{N(t)}} \right | N(t) \right]$ and $\mathbf{E}_{y}
				\left[ \left.e^{-\lambda Z_{N(t)}} \right | N(t) \right]$ are positively correlated being functions of the same
				random variable $N(t)$. Indeed, if we denote by $f$ the generating function of
				the GWP $Z$ and $f_n$ its $n$-th iterate, we know that we have
				\begin{align*}
				\mathbf{E}_{x}\left[ \left.e^{-\lambda Z_{N(t)}} \right | N(t) \right]&=f_{N(t)}\left(e^{-\lambda}\right)^{x};\\
				\mathbf{E}_{y}\left[ \left.e^{-\lambda Z_{N(t)}} \right | N(t) \right]&=f_{N(t)}\left(e^{-\lambda}\right)^{y}.
				\end{align*}
				Hence
				\begin{equation*}
					K_{j,k}(t) = \text{Cov}(f_{N(t)}\left(e^{-\lambda}\right)^{j}, f_{N(t)}\left(e^{-\lambda}\right)^{k}).
				\end{equation*}
				Being positive powers of the same function of $N(t)$, positive correlation follows
				and we obtain the inequality \eqref{Dis_branch_discrete}.
			\end{proof}

			\begin{remark}
				In a GWP, when we start with an initial population $Z_0=j+k$, the number of individuals $Z_n$ in the $n$-th generation
				is the sum of two independent copies of the process with initial size equal to $j$ and $k$ respectively.
				By introducing a random time change between the generations, we create a positive correlation between the sizes of subgroups
				of a given initial population.
			\end{remark}

			\begin{remark}
				In the special case of deterministic time-change, that is when $\mathbb{P}(N_t=l)=1$, with $g \colon \mathbb{N}
				\to \mathbb{N}$, $l=g([t])$ a suitable non decreasing function, the inequality \eqref{Dis_branch_discrete} becomes the classical
				equality that expresses the branching property of GWPs, i.e.
				\begin{equation}
					\label{eq1}
					\mathbf{E}_{j+k}\left[ e^{-\lambda Z_l}\right]=\mathbf{E}_{j}\left[
					e^{-\lambda Z_l}\right]\mathbf{E}_{k}\left[ e^{-\lambda Z_l}\right],
				\end{equation}
				for all $j,$ $k\in\mathbb{N}$ and all $\lambda\geq 0$.
			\end{remark}

		\subsection{Time-changed CSBP and scaling limit}\label{violin}

			Let us consider a CSBP $X$ and an inverse $\beta$-stable subordinator $\mathcal{E}$ independent of $X$. Consider the time-changed
			process 
			\begin{equation*}
				\mathcal{X}(t)\colon= X(\mathcal{E}(t)),
			\end{equation*}
			for all $t\geq0$.
			It is possible to show that there exists a sequence of time-changed GWPs $\{\mathcal{Z}^{(n)}_t\}_{t\geq0}$,
			such that, suitably rescaled, converges to the process $\{\mathcal{X}(t)\}_{t\geq0}$ in $\mathbb{D}([0,\infty))$. 

			\begin{theorem}
				\label{theor_conv} Let $\{X(t)\}_{t\geq0}$ be a CSBP and $\{\mathcal{E}(t)\}_{t\geq0}$ be the inverse of a $\beta$-stable
				subordinator, $\beta\in(0,1]$, independent of $\{X(t)\}_{t \ge 0}$.
				Consider the process $\{\mathcal{X}(t):=X(\mathcal{E}(t))\}_{t\geq0}$; there exists a sequence of time-changed GWPs
				$\{\mathcal{Z}^{(n)}_t\}_{t\geq0}$ and two increasing sequences $\{\tilde{b}_n\}_{n\geq0}$ and  $\{c_n\}_{n\geq0}$
				with $\lim_{n\rightarrow\infty}\tilde{b}_n=\lim_{n\rightarrow\infty}c_n=\infty$, such that for $n\rightarrow\infty$
				\begin{equation}
					\label{limite_scaling}
					\left\{\frac{\mathcal{Z}^{(\tilde{b}_n)}_{nt}}{c_{\tilde{b}_n}}\right\}_{t\geq0} \Longrightarrow \{\mathcal{X}(t)\}_{t\geq0},
				\end{equation}
				where the convergence is in $\mathbb{D}([0,\infty))$. 
			\end{theorem}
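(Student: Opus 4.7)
The plan is to realize the left-hand side of \eqref{limite_scaling} as a composition $X_{\tilde{b}_n}\circ\mathcal{E}_n$ of two independently converging processes and then invoke the continuous mapping theorem for composition in the Skorokhod space $\mathbb{D}([0,\infty))$. First I would set up the two building blocks. On one hand, by the Lamperti--Grimvall result quoted after \eqref{succlimite}, I can find a sequence of GWPs $Z^{(k)}$ and rescaling sequences $c_k\uparrow\infty$ so that
\begin{equation*}
X_k(u):=\frac{Z^{(k)}_{\lfloor ku\rfloor}}{c_k}\Longrightarrow X(u)\qquad\text{in }\mathbb{D}([0,\infty)).
\end{equation*}
On the other hand, picking waiting times $J_1,J_2,\dots$ in the strict domain of attraction of the $\beta$-stable law and using the regularly varying sequence $\tilde{b}_n$ described in Section~\ref{waiting_times}, I have the independent convergence
\begin{equation*}
\mathcal{E}_n(t):=\frac{N_{nt}}{\tilde{b}_n}\Longrightarrow\mathcal{E}(t)\qquad\text{in }\mathbb{D}([0,\infty)),
\end{equation*}
exactly as in \eqref{N->E}.

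The second step is the key identification: since $N_{nt}$ is integer-valued, $\lfloor\tilde{b}_n\cdot N_{nt}/\tilde{b}_n\rfloor=N_{nt}$, so
\begin{equation*}
X_{\tilde{b}_n}\bigl(\mathcal{E}_n(t)\bigr)=\frac{Z^{(\tilde{b}_n)}_{N_{nt}}}{c_{\tilde{b}_n}}=\frac{\mathcal{Z}^{(\tilde{b}_n)}_{nt}}{c_{\tilde{b}_n}},
\end{equation*}
using the very definition \eqref{time_changedGW} of the time-changed GWP. Thus proving \eqref{limite_scaling} reduces to showing that the composition $X_{\tilde{b}_n}\circ\mathcal{E}_n$ converges in law to $X\circ\mathcal{E}$ in $\mathbb{D}([0,\infty))$.

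To get there, I would first upgrade the two marginal convergences to a joint one. Since the GWPs $Z^{(k)}$ are independent of the renewal process $N$, and the limits $X$ and $\mathcal{E}$ are independent by hypothesis, joint convergence $(X_{\tilde{b}_n},\mathcal{E}_n)\Rightarrow(X,\mathcal{E})$ in $\mathbb{D}([0,\infty))\times\mathbb{D}([0,\infty))$ follows from the marginal convergences together with independence (this is a standard fact once each factor converges separately). Then I would invoke the continuity of the composition map $(x,y)\mapsto x\circ y$ on the subset of pairs where $y$ is continuous and nondecreasing with $y(0)=0$: this is a classical Skorokhod-space result (see e.g.\ Whitt or \cite{JaSh13}). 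Since $\mathcal{E}$ has, almost surely, continuous, nondecreasing paths starting at $0$, the limit $(X,\mathcal{E})$ lies in the set of continuity points of the composition map with probability one, and the continuous mapping theorem yields $X_{\tilde{b}_n}\circ\mathcal{E}_n\Rightarrow X\circ\mathcal{E}=\mathcal{X}$.

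The main obstacle I expect is the technical verification that the composition map is indeed continuous at $(X,\mathcal{E})$ almost surely; in particular one needs that the inner process $\mathcal{E}$ is a.s.\ continuous (which holds since $\mathcal{E}$ is the inverse of a strictly increasing $\beta$-stable subordinator for $\beta\in(0,1)$, and reduces to the identity for $\beta=1$) and that $X$ has no fixed discontinuities at the range of $\mathcal{E}$. Handling the degenerate case $\beta=1$ separately, where $\mathcal{E}(t)=t$ and $\mathcal{X}=X$, is straightforward and follows from the Lamperti--Grimvall convergence alone. Apart from this, the argument is a clean composition-of-limits, with all the probabilistic work already done in Section~\ref{back}.
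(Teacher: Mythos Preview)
Your proposal is correct and follows a route close to, but distinct from, the paper's own proof. The paper establishes joint convergence of the pair $\bigl(X_{\tilde b_n}(\cdot),\,b_n^{-1}T(n\,\cdot)\bigr)$ to $(X,D)$ in $\mathbb{D}\times\mathbb{D}$, where $D$ is the stable subordinator itself, and then applies the map $\Psi\colon(x,d)\mapsto x\circ d^{-1}$ of \cite{HeSt11}, whose continuity at the limit point is guaranteed precisely because $D$ is a.s.\ strictly increasing. You instead pass to the inverse process first, invoking the already-stated convergence \eqref{N->E} for $\mathcal{E}_n\Rightarrow\mathcal{E}$, and then apply the plain composition map $(x,e)\mapsto x\circ e$. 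Your route is arguably more direct and avoids the inversion machinery, at the cost of having to verify by hand that composition is $J_1$-continuous at $(X,\mathcal{E})$; the paper's route offloads that verification entirely to a ready-made citation.

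The point you flag as the ``main obstacle'' is real and deserves to be promoted from a caveat to an actual step. The limit $\mathcal{E}$ is continuous and nondecreasing but \emph{not} strictly increasing (it is constant on each interval $[D(u-),D(u)]$), so Whitt-type composition results require that $X$ a.s.\ has no jump at a level $u$ on which $\mathcal{E}$ is flat, i.e.\ at a jump time of $D$. This holds because $X$ and $D$ are independent, $X$ has at most countably many jump times a.s., and $D$ is stochastically continuous, so $\mathbf{P}(D\text{ jumps at }u)=0$ for each fixed $u$; conditioning on $X$ and taking a countable union gives the claim. Once this is made explicit your argument is complete.
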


			\begin{proof}
				Consider $J_1, J_2. \dots$, i.i.d.\ waiting times in the domain of attraction of a stable law of index $\beta$,
				(see Section~\ref{waiting_times}). Then there exists a sequence of positive real numbers $\{\tilde b_n\}$, diverging to
				infinity, such that the limit \eqref{N->E} holds. At the same time, we consider a sequence of GWPs $\{Z^{(k)}\}_{k\geq0}$ such
				that \eqref{succlimite} holds. Since the waiting times and the GWPs are independent, it  follows that,
				for all $n$ $\geq 0$,
				\begin{equation*}
					\left( X_{\tilde{b}_n}(t), \frac{T(nt)}{b_n} \right ) =\left( \frac{Z^{(\tilde{b}_n)}(\lfloor\tilde{b}_nt\rfloor)}{c_{\tilde{b}_n}},
					\frac{T(nt)}{b_n} \right) \Longrightarrow \left( X(t), D(t) \right)
				\end{equation*}
				in the product space $\mathbb{D}([0,\infty))\times \mathbb{D}([0,\infty))$,
				where $Z^{(k)}(0)/c_k \rightarrow x$, $X(t)$ is a {CSBP} with transition semigroup $P_t(x,\cdot)$ and $D(t)$ is the stable
				subordinator of parameter $\beta$. Let us write $\mathbb{D}_{\uparrow, u}(\mathbb{R}^+)$ for the subset of unbounded non decreasing
				c\`adl\`ag functions and $\mathbb{D}_{\uparrow\uparrow, u}(\mathbb{R}^+)$ for the subset of unbounded strictly increasing ones.
				We see that, for all $n\geq0$, the pair
				\begin{align*}
					\left( \frac{Z^{(\tilde{b}_n)}(\lfloor\tilde{b}_nt\rfloor)}{c_{\tilde{b}_n}}, \frac{T(nt)}{b_n} \right)
				\end{align*}
				belongs to the product space
				$\mathbb{D}(\mathbb{R}^+)\times \mathbb{D}_{\uparrow, u}(\mathbb{R}^+)$ and the limit $\left( X(t) , D(t) \right)$ belongs
				to $\mathbb{D}(\mathbb{R}^+)\times \mathbb{D}_{\uparrow\uparrow, u}(\mathbb{R}^+)$. Then, following the approach in~\cite{HeSt11}, we
				define the function $\Psi\colon \mathbb{D}(\mathbb{R}^+)\times \mathbb{D}_{\uparrow, u}(\mathbb{R}^+) \rightarrow
				\mathbb{D}(\mathbb{R}^+)\times \mathbb{D}(\mathbb{R}^+)$ mapping $(x(t),d(t))$ to $(x(e(t)), t)$,
				where $e(t)$ is the inverse of $d(t)$. In general the function $\Psi$ is not continuous, however, in our case, since  the limit point
				$(X(t),D(t))$ actually belongs to $\mathbb{D}(\mathbb{R}^+)\times \mathbb{D}_{\uparrow\uparrow, u}(\mathbb{R}^+)$, as stated in
				\cite[Proposition~2.3]{HeSt11}, the function $\Psi$ is continuous at $(X(t),D(t))$. This implies that the following limit holds,
				where $\pi_1$ is the projection on the first coordinate,
				\begin{align*}
					\frac{\mathcal{Z}^{(\tilde{b}_n)}_{nt}}{c_{\tilde{b}_n}} & = X_{\tilde{b}_n}(N(nt))=\pi_1\left(\Psi\left( X_{\tilde{b}_n}(t),
					\frac{T(nt)}{b_n} \right)\right) \\
					& \Longrightarrow \pi_1\left(\Psi\left(X(t) , D(t) \right)\right).
				\end{align*}
				This proves \eqref{limite_scaling}.
			\end{proof}

	\section{Some properties of the time-fractional CSBP}\label{sec_properties}

		In the previous section we have characterized the process $\{\mathcal{X}(t)\}_{t\geq0}$ as the limit of a rescaled sequence of
		time-changed GWPs, where in the discrete case the time between two generations is substituted by random variables that
		produce a slowed-down dynamics. In the limit this is modeled by the inverse stable subordinator. We are now interested in capturing
		the main features of the time-changed process $\mathcal{X}$ and in underlining the differences between it and the classical CSBP.
		Note that the tree structure underlying Markov branching processes and CSBPs, although randomly stretched and squashed, it is
		still a characterizing feature of the corresponding time-changed processes.

		\subsection{Branching property}

			Let us consider $\beta\in(0,1]$. We expect the time-changed CSBP $\{\mathcal{X}(t)\}_{t\geq0}$ to satisfy the classical
			branching property only when $\beta=1$. Indeed, in general, it holds
			\begin{equation}
				\label{dis_branching_cont}
				\mathbf{E}_{x+y}[e^{-\lambda \mathcal{X}(t)}]\geq \mathbf{E}_{x}[e^{-\lambda \mathcal{X}(t)}]
				\mathbf{E}_{y}[e^{-\lambda \mathcal{X}(t)}]
			\end{equation} 
			and 
			\begin{equation}
				\label{lim_beta}
				\lim_{\beta\rightarrow 1}\mathbf{E}_{x+y}[e^{-\lambda \mathcal{X}(t)}]=\mathbf{E}_{x}[e^{-\lambda \mathcal{X}(t)}]
				\mathbf{E}_{y}[e^{-\lambda \mathcal{X}(t)}].
			\end{equation}
			Similarly to Section~\ref{GW-modif}, this is based on the following:
			\begin{align*}
				& \mathbf{E}_{x+y}[e^{-\lambda \mathcal{X}(t)}]- \mathbf{E}_{x}[e^{-\lambda \mathcal{X}(t)}]
				\mathbf{E}_{y}[e^{-\lambda \mathcal{X}(t)}] \\
				& \text{\small $= \mathbf{E}\left[\mathbf{E}_{x+y}\left(e^{-\lambda X(\mathcal{E}(t))}|\mathcal{E}(t)\right)\right]-
				\mathbf{E}\left[\mathbf{E}_{x}
				\left(e^{-\lambda X(\mathcal{E}(t))}|\mathcal{E}(t)\right)\right]\mathbf{E}\left[ \mathbf{E}_{y}
				\left(e^{-\lambda X(\mathcal{E}(t))}|\mathcal{E}(t)\right)\right]$} \\
				& = \mathbf{E}\left[e^{-(x+y)\nu_{\mathcal{E}(t)}(\lambda)}\right]
				- \mathbf{E}[e^{-x\nu_{\mathcal{E}(t)}}] \mathbf{E}[e^{-y\nu_{\mathcal{E}(t)}}] \\
				& = \text{Cov}(e^{-x\nu_{\mathcal{E}(t)}(\lambda)},e^{-y\nu_{\mathcal{E}(t)}(\lambda)}).
			\end{align*}
			Then, by positive correlation, we see that $\text{Cov}(e^{-x\nu_{\mathcal{E}(t)}(\lambda)},e^{-y\nu_{\mathcal{E}(t)}(\lambda)})\geq0$.
			Moreover, since $\mathcal{E}(t)\rightarrow t$ in distribution as $\beta\rightarrow1$, by dominated convergence and the continuity
			of $\nu_{t}(\lambda)$ in $t$ (which is a consequence of \eqref{exponent}), we see that 
			\begin{align*}
				\lim_{\beta\rightarrow1}\text{Cov}(e^{-x\nu_{\mathcal{E}(t)}(\lambda)},e^{-y\nu_{\mathcal{E}(t)}(\lambda)})=0,
			\end{align*}
			proving \eqref{lim_beta}. Note that the random time-change introduces a positive correlation between the evolution of  the
			subgroups of the initial population that is not present in the classical CSBP. However, for any $\beta \in (0,1)$,
			we still have a conditional branching property, i.e.
			\begin{align*}
				\mathbf{E}[\mathbf{E}_{x+y}[e^{-\lambda X(\mathcal{E}(t))}|\mathcal{E}(t)]]= \mathbf{E}[\mathbf{E}_{x}[e^{-\lambda
				X(\mathcal{E}(t))}|\mathcal{E}(t)]\ \mathbf{E}_{y}[e^{-\lambda X(\mathcal{E}(t))}|\mathcal{E}(t)]].
			\end{align*}

		\subsection{First and second moment }\label{momenti_sec}
			
			Here we obtain the expression for the first and the second moment of the process $\{\mathcal{X}(t)\}$, when they exist.
			To this aim, we exploit the computations in~\cite{Lam67} for the explicit formula of first and second moment of a CSBP,
			see equation \eqref{momenti}, and the properties of the
			Mittag--Leffler function defined in \eqref{def_mittag_leffl}, see~\cite{HaErMaOb55}. 

			\begin{theorem}
				\label{mom1csbpcomp} Let $\{X(t)\}_{t\geq0}$ be a CSBP with Laplace exponent $v_t(\lambda)$ and branching mechanism $\psi(z)$ and
				let $\{\mathcal{E}(t)\}_{t\geq0}$ be an inverse stable subordinator with index $\beta\in(0,1)$ and with density function
				$h(\cdot,t)$, for every fixed time $t\geq0$. If $\frac{\partial v_t(0^+)}{\partial \lambda}$ exists finite and
				$\psi'(0^+)=b\geq \sigma_h$, where $\sigma_h$ is the abscissa of convergence for the Laplace transform of the function $h(\cdot,t)$,
				then the time-changed process $\{\mathcal{X}(t)\}_{t\geq0}$ has finite first moment that takes the form
				\begin{equation}
					\label{formulamediaCSBPcomp}
					\mathbf{E}_x[\mathcal{X}(t)]=x E_{\beta}(-bt^{\beta}), \qquad t\geq0.
				\end{equation}
			\end{theorem}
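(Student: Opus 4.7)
The plan is to compute $\mathbf{E}_x[\mathcal{X}(t)]$ by conditioning on $\mathcal{E}(t)$ and exploiting the explicit first-moment formula \eqref{momenti} for the underlying CSBP. The hypothesis that $\partial_\lambda v_t(0^+)$ exists and is finite is precisely what is needed to differentiate the Laplace transform $\mathbf{E}_x[e^{-\lambda X(u)}] = e^{-x v_u(\lambda)}$ at $\lambda=0^+$, which yields $\mathbf{E}_x[X(u)] = x \,\partial_\lambda v_u(0^+) = x e^{-b u}$ with $b=\psi'(0^+)$, for each fixed $u\ge 0$.

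Using independence of $X$ and $\mathcal{E}$ together with the tower property, I would then write
\begin{equation*}
\mathbf{E}_x[\mathcal{X}(t)] \;=\; \mathbf{E}\!\left[\mathbf{E}_x\!\left[X(\mathcal{E}(t)) \,\big|\, \mathcal{E}(t)\right]\right] \;=\; \mathbf{E}\!\left[x\, e^{-b\,\mathcal{E}(t)}\right] \;=\; x \int_0^\infty e^{-bu}\, h(u,t)\, du,
\end{equation*}
the interchange being justified by Fubini--Tonelli once integrability is in hand. The rightmost integral is the Laplace transform of $h(\cdot,t)$ at the point $s=b$; the hypothesis $b \geq \sigma_h$ is imposed exactly to guarantee its convergence, which is a non-trivial requirement in the supercritical case $b<0$, where the integrand grows exponentially. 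I would then identify this integral in closed form via the Mittag--Leffler identity recalled in Section~\ref{waiting_times}, extended when necessary to negative arguments using the fact that the series~\eqref{def_mittag_leffl} defines $E_\beta$ as an entire function. This produces $\mathbf{E}[e^{-b\mathcal{E}(t)}] = E_\beta(-b t^\beta)$ and hence the claimed formula~\eqref{formulamediaCSBPcomp}.

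The main obstacle I anticipate is the integrability and analytic-continuation bookkeeping in the supercritical regime $b<0$: one must verify that $b\geq\sigma_h$ genuinely renders $\mathbf{E}[e^{-b\mathcal{E}(t)}]$ finite, and that the Mittag--Leffler closed form persists beyond the natural Laplace half-plane $s>0$ where it was originally stated. In the subcritical and critical cases $b\geq 0$, the argument collapses to a clean tower-property computation followed by a direct application of the Laplace identity, with no analytic-continuation step required, so that all the analytic subtlety is concentrated in the supercritical handling.
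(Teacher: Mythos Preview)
Your approach is correct and shares the paper's core strategy: condition on $\mathcal{E}(t)$, apply the CSBP first-moment formula \eqref{momenti} to get $\mathbf{E}_x[\mathcal{X}(t)]=x\int_0^\infty e^{-bu}h(u,t)\,du$, and then identify this integral. The only difference is in how the integral is evaluated. You quote the $u$-Laplace identity $\mathbf{E}[e^{-\lambda\mathcal{E}(t)}]=E_\beta(-\lambda t^\beta)$ from Section~\ref{waiting_times} directly (and correctly note that for $b<0$ this requires extending it past the stated range $\lambda>0$). The paper instead takes a further Laplace transform in the $t$ variable, uses \eqref{laplace_h} to reduce the double integral to $x\,\mu^{\beta-1}/(\mu^\beta+b)$, and then recognizes this as the $t$-Laplace transform of $E_\beta(-bt^\beta)$. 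That detour has the mild benefit of sidestepping the analytic-continuation issue you flag, since the computation goes through for all $\mu$ with $\mu^\beta+b>0$; your route is more direct when $b\ge 0$.
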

			
			\begin{proof}
				For the independence of $\{\mathcal{E}(t)\}_{t\geq0}$ from $\{X(t)\}_{t\geq0}$, together with the formula for the first moment of a
				CSBP, we get
				\begin{equation}
					\label{med}
					\mathbf{E}_x[\mathcal{X}(t)]=\int_0^{\infty}\mathbf{E}_x[X(u)]h(u,t)du=\int_0^{\infty}xe^{-bu}h(u,t)du.
				\end{equation}
				Since $b\geq\sigma_h$ the last integral is finite and it is essentially the Laplace transform of $h(u,t)$
				with respect to the variable $u$. To obtain an explicit form of the integral, we apply again the Laplace transform
				to \eqref{med}, this time with respect to the variable $t$, obtaining
				\begin{equation*}
					\mathcal{L}\left[ \mathbf{E}_x[\mathcal{X}(\cdot)]\right](\mu)= x \int_0^{\infty}  e^{-bu}  \int_0^{\infty} e^{-\mu t}h(u,t)  dt du.
				\end{equation*}
				Formula \eqref{laplace_h} leads to
				\begin{align*}
					\mathcal{L}\left[ \mathbf{E}_x[\mathcal{X}(\cdot)]\right](\mu)
					 & = x \mu^{\beta -1}  \int_0^{\infty}  e^{-u(b+\mu^{\beta})} du \\
					 & = x \frac{\mu^{\beta-1}}{\mu^{\beta} +b}.
				\end{align*}
				Since the latter expression is the Laplace transform of the Mittag--Leffler function $E_{\beta}(-bt^\beta)$ we immediately
				obtain formula \eqref{formulamediaCSBPcomp}.
			\end{proof}

			\begin{figure}[ht!] 
				\minipage{0.45\textwidth}
			    	\includegraphics[width=\linewidth]{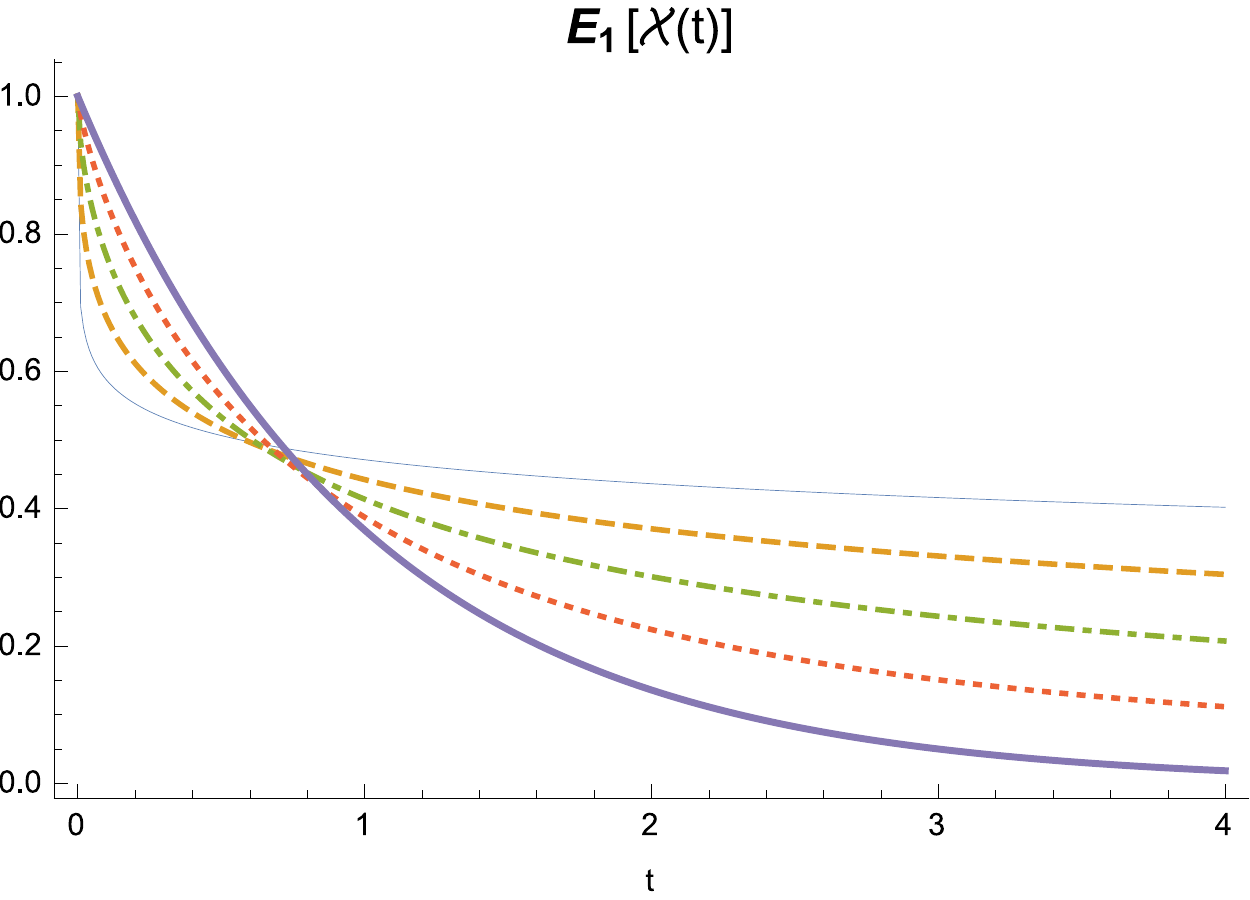}
				\endminipage \hfill
				\minipage{0.45\textwidth}
			    	\includegraphics[width=\linewidth]{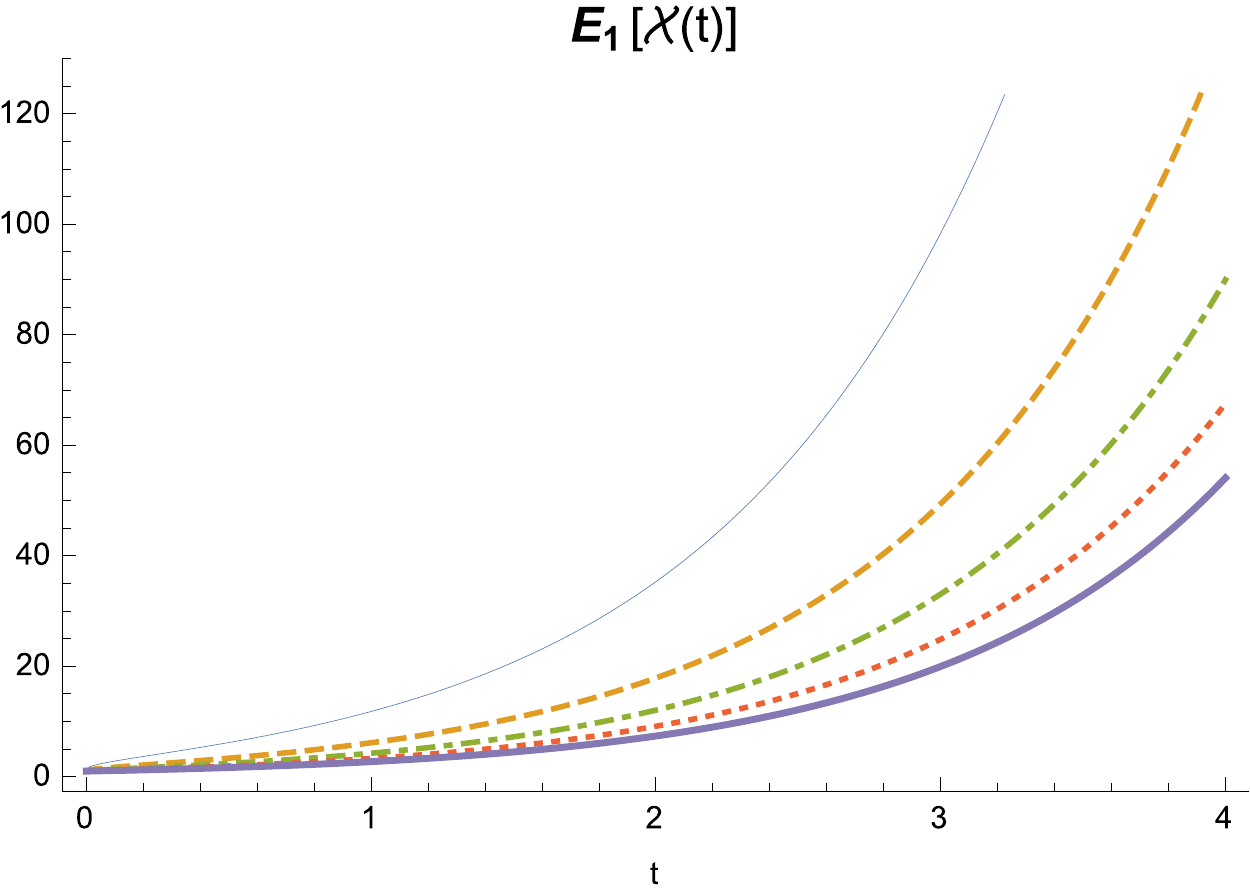}
				\endminipage \hfill
				\minipage{0.1\textwidth}
					\includegraphics[width=\linewidth]{legenda.pdf}
				\endminipage
				\caption{\label{fig:media}Plots of the first moment $\mathbf{E}_1[\mathcal{X}(t)]$ of a time-changed CSBP for $t\in[0,4]$
					and different values of $\beta$, from $0.2$ to $1$. The time-changed CSBP has initial condition $\mathcal{X}(0)=1$ a.s.;
					on the left the subcritical case with $b=1$, and on the right  the supercritical case with $b=-1$.}
			\end{figure}

			\begin{theorem}
				\label{mom2csbpcomp}
				Let $\{X(t)\}_{t\geq0}$ be a CSBP with Laplace exponent $v_t(\lambda)$ and branching mechanism $\psi(z)$ and
				let $\{\mathcal{E}(t)\}_{t\geq0}$ be an inverse stable subordinator with index $\beta\in(0,1)$ and with density function
				$h(\cdot,t)$ for every fixed time $t\geq0$. If $\frac{\partial v_t(0^+)}{\partial \lambda}$ and
				$\frac{\partial^2 v_t(0^+)}{\partial \lambda^2}$ exist finite and $\psi'(0^+)=b\geq\sigma_h$
				as in Theorem~\ref{mom1csbpcomp}, then the time-changed process $\{\mathcal{X}(t)\}_{t\geq0}$ has the following finite
				second moment:
				\begin{equation}
					\label{momsecondoCSBPcomp}
					\mathbf{E}_x\left[ \mathcal{X}(t)^2\right] =
					\begin{cases}
						x^2 + x \tilde{\beta} \frac{\Gamma(2)}{\Gamma(\beta+1)}t^{\beta}, & b=0, \\
						x^2E_{\beta}(-2bt^{\beta})-\frac{\tilde{\beta}x}{b}\left( E_{\beta}(-2bt^{\beta})-E_{\beta}(-bt^{\beta})\right),
						& b\neq0,
					\end{cases}
				\end{equation}
				where $\tilde{\beta}=\left(2c + \int_0^{\infty} u^2 m(du)\right )$.
			\end{theorem}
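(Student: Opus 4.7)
The plan is to mimic the argument used to prove Theorem \ref{mom1csbpcomp}: condition on $\mathcal{E}(t)$ to reduce the problem to computing integrals of the known second moment of the underlying CSBP against the density $h(u,t)$, and then identify each integral through its Laplace transform in $t$ (making use of \eqref{laplace_h}) and the standard Laplace-transform pair for the Mittag--Leffler function already used in Theorem \ref{mom1csbpcomp}.

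First I would use independence of $\mathcal{E}$ and $X$ to write
\begin{equation*}
\mathbf{E}_x[\mathcal{X}(t)^2]=\int_0^\infty \mathbf{E}_x[X(u)^2]\, h(u,t)\, du,
\end{equation*}
and substitute the explicit form of $\mathbf{E}_x[X(u)^2]$ from \eqref{momenti}, treating $b=0$ and $b\neq 0$ separately as in the statement. In the critical case $b=0$ the integrand is affine in $u$, so the result follows as soon as one knows $\mathbf{E}[\mathcal{E}(t)]=t^\beta/\Gamma(\beta+1)$; this is obtained exactly as in Theorem \ref{mom1csbpcomp} by taking the Laplace transform in $t$, applying \eqref{laplace_h}, computing $\int_0^\infty u\mu^{\beta-1}e^{-u\mu^\beta}du=\mu^{-\beta-1}$, and inverting.

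In the case $b\neq 0$ the integrand is a linear combination of $e^{-2bu}$ and $e^{-bu}$. I would therefore split the integral into two pieces of the form $\int_0^\infty e^{-au}h(u,t)du$ with $a\in\{b,2b\}$. As in the proof of Theorem \ref{mom1csbpcomp}, applying the Laplace transform in $t$, using \eqref{laplace_h}, and invoking Fubini yields
\begin{equation*}
\mathcal{L}_t\!\left[\int_0^\infty e^{-au}h(u,\cdot)\,du\right]\!(\mu)=\mu^{\beta-1}\!\int_0^\infty e^{-u(a+\mu^\beta)}\,du=\frac{\mu^{\beta-1}}{\mu^\beta+a},
\end{equation*}
which is precisely the Laplace transform of $E_\beta(-a t^\beta)$. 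Inserting $a=2b$ and $a=b$ and recombining the linear combination yields the stated expression \eqref{momsecondoCSBPcomp}.

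The main delicate point is integrability: the Laplace-in-$u$ integrals must converge and Fubini must apply. The hypothesis $\psi'(0^+)=b\geq \sigma_h$ (together with the existence of the second derivative of $v_t$ at $0^+$, which controls the contribution of $\tilde\beta$) is exactly what is needed to make $\int_0^\infty e^{-bu}h(u,t)du$ finite, and the analogous bound for $e^{-2bu}$ follows from the rapid decay of $h(u,t)$ in $u$ encoded in \eqref{laplace_h}. Once convergence is in place, the identification of the Laplace transform $\mu^{\beta-1}/(\mu^\beta+a)$ with $E_\beta(-at^\beta)$ is a standard Mittag--Leffler Laplace-pair identity, so the proof reduces to bookkeeping the coefficients from \eqref{momenti}.
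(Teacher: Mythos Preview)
Your proposal is correct and follows essentially the same route as the paper: condition on $\mathcal{E}(t)$, substitute the explicit second moment \eqref{momenti} of the CSBP, and then identify the resulting integrals $\int_0^\infty e^{-au}h(u,t)\,du$ with $E_\beta(-at^\beta)$ (and $\int_0^\infty u\,h(u,t)\,du$ with $t^\beta/\Gamma(\beta+1)$ in the critical case). The only cosmetic difference is that the paper quotes $\mathbf{E}[\mathcal{E}(t)]$ from the literature and uses the already-stated Laplace transform of $h(\cdot,t)$ in $u$ directly, whereas you re-derive both via the Laplace-in-$t$ argument of Theorem~\ref{mom1csbpcomp}.
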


			\begin{proof}
				Fix $t\geq0$, we divide the proof into two different cases.
				\begin{itemize}
					\item \emph{Case $b=0$:}
						We know that
						\begin{align*}
							\mathbf{E}_x\left[\mathcal{X}(t)^2\right] & =\mathbf{E}\left[ \mathbf{E}_x\left[\left.
							X(\mathcal{E}(t))^2\right | \mathcal{E}(t)\right] \right] \\
							& = \int_0^{\infty} (x^2+ x\tilde{\beta}u) h(u, t) du
							= x^2 + x \tilde{\beta} \mathbf{E}[\mathcal{E}(t)].
						\end{align*}
						It is known, see \cite{MeSc04}, Corollary 3.1, that the first moment of the process
						$\{\mathcal{E}(t)\}_{t\geq0}$, for a fixed time $t\geq0$, takes the form
						\begin{align*}
							\mathbf{E}\left[\left(\mathcal{E}(t)\right)\right]=\frac{\Gamma(2)t^{\beta}}{\Gamma(\beta+1)}.
						\end{align*}
						Hence we obtain
						\begin{align*}
							\mathbf{E}_x\left[\mathcal{X}(t)^2\right]=x^2 + x \tilde{\beta} \frac{\Gamma(2)t^{\beta}}{\Gamma(\beta+1)}.
						\end{align*}
					
					\item \emph{Case $b\neq0$:}
						In this case we write
						\begin{align*}
							\mathbf{E}_x\left[\mathcal{X}(t)^2\right] & = \int_0^{\infty} \left(x^2e^{-2bu}
							-\frac{\tilde{\beta}x}{b}(e^{-2bu}-e^{-bu}) \right) h(u, t) du\\
							& = x^2E_{\beta}(-2bt^{\beta})-\frac{\tilde{\beta}x}{b}\left( E_{\beta}(-2bt^{\beta})-E_{\beta}(-bt^{\beta})\right).
						\end{align*}
				\end{itemize}
			\end{proof}
 
			\begin{figure}[ht!]
				\centering
				\begin{minipage}[b]{\linewidth}
					\includegraphics[width=0.44\linewidth]{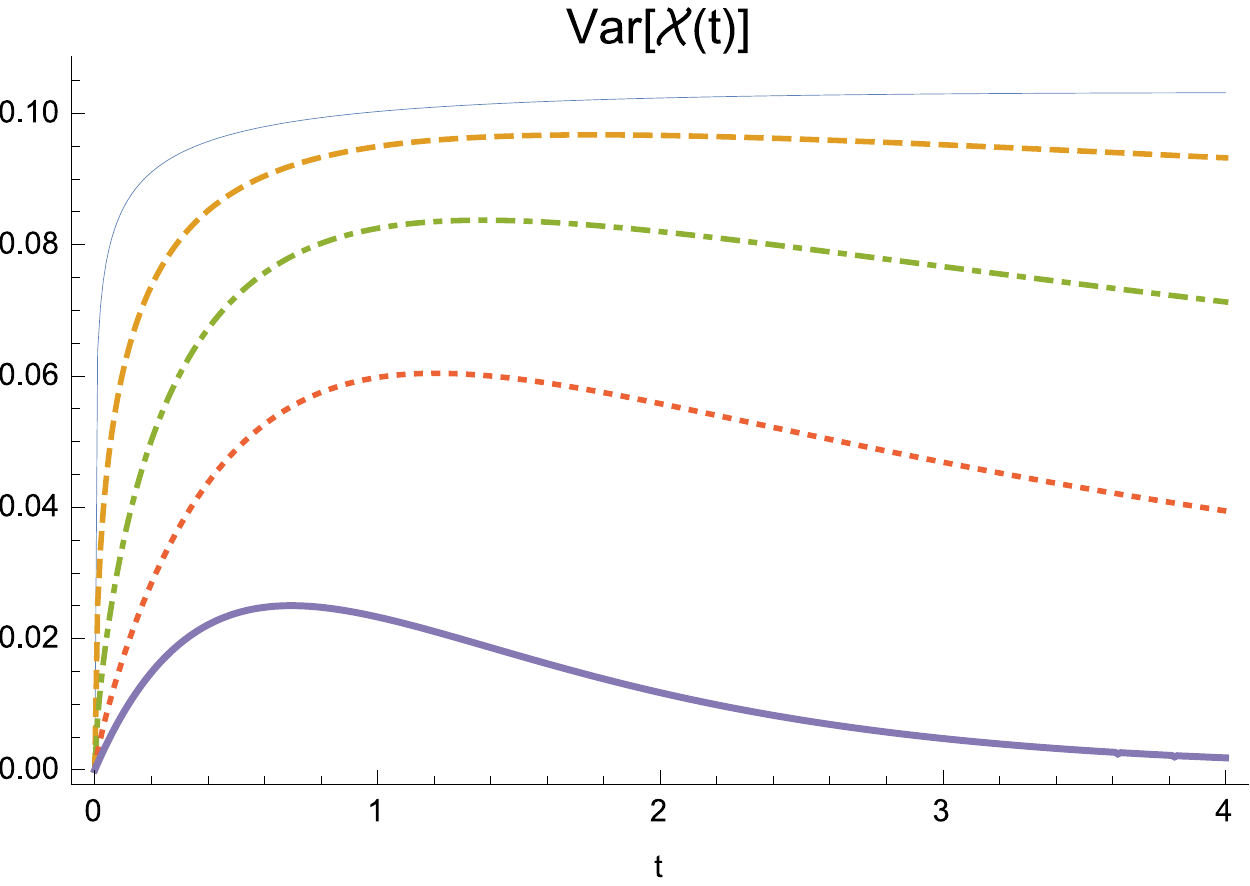}
				    \includegraphics[width=0.44\linewidth]{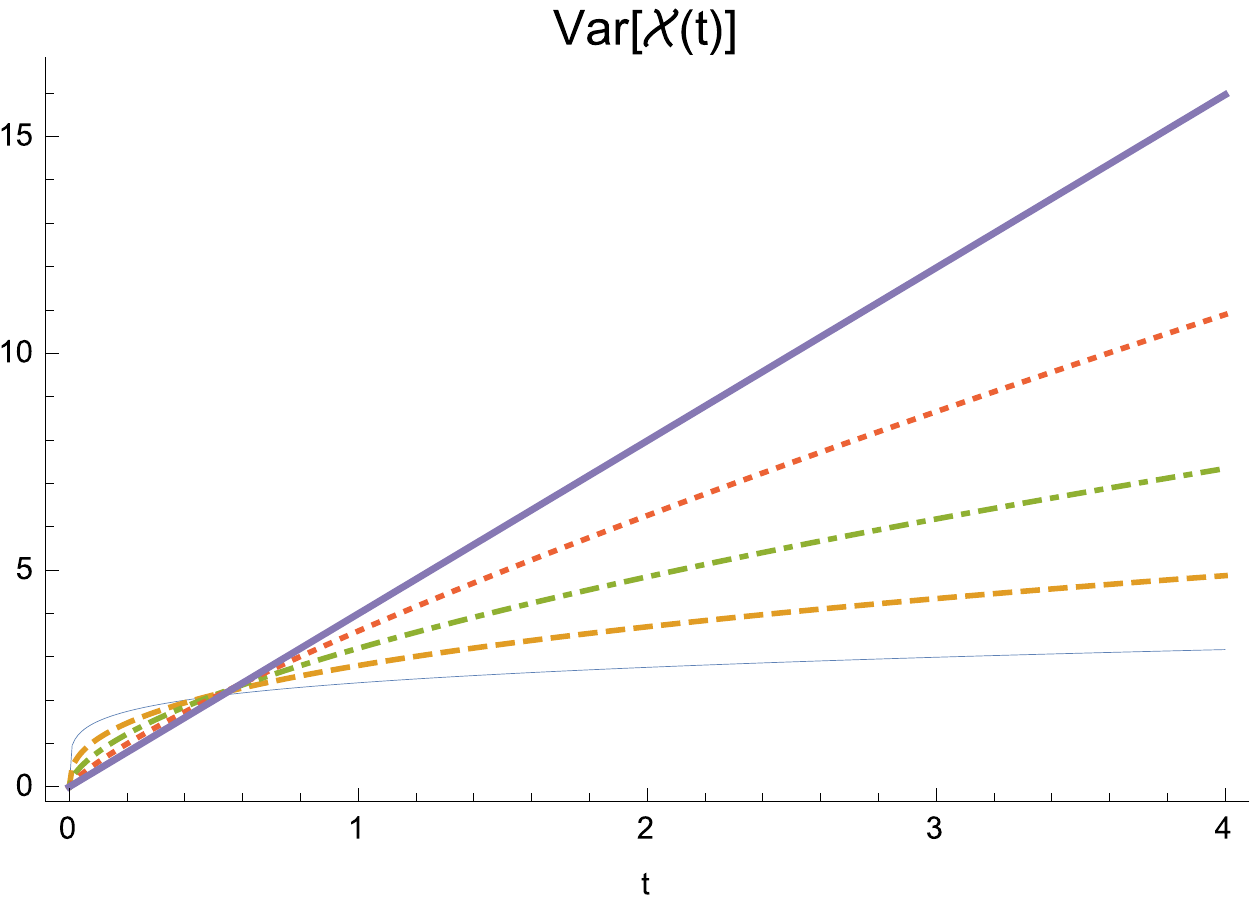}
				    \includegraphics[width=0.1\linewidth]{legenda.pdf}
				\end{minipage} \hfill
				\ \hspace{2mm} \hspace{3mm} \
				\begin{minipage}[b]{\linewidth}
					\includegraphics[width=0.44\linewidth]{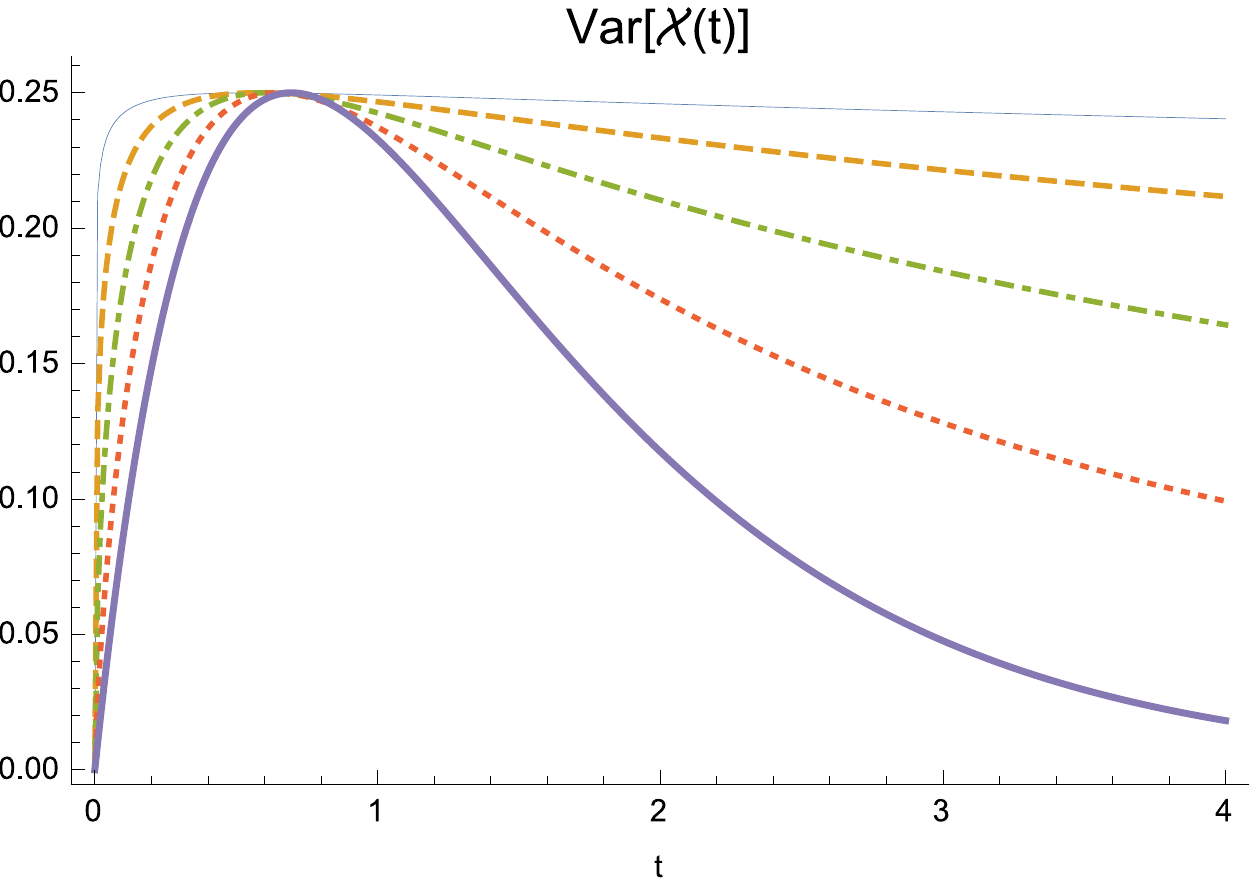}
				    \includegraphics[width=0.44\linewidth]{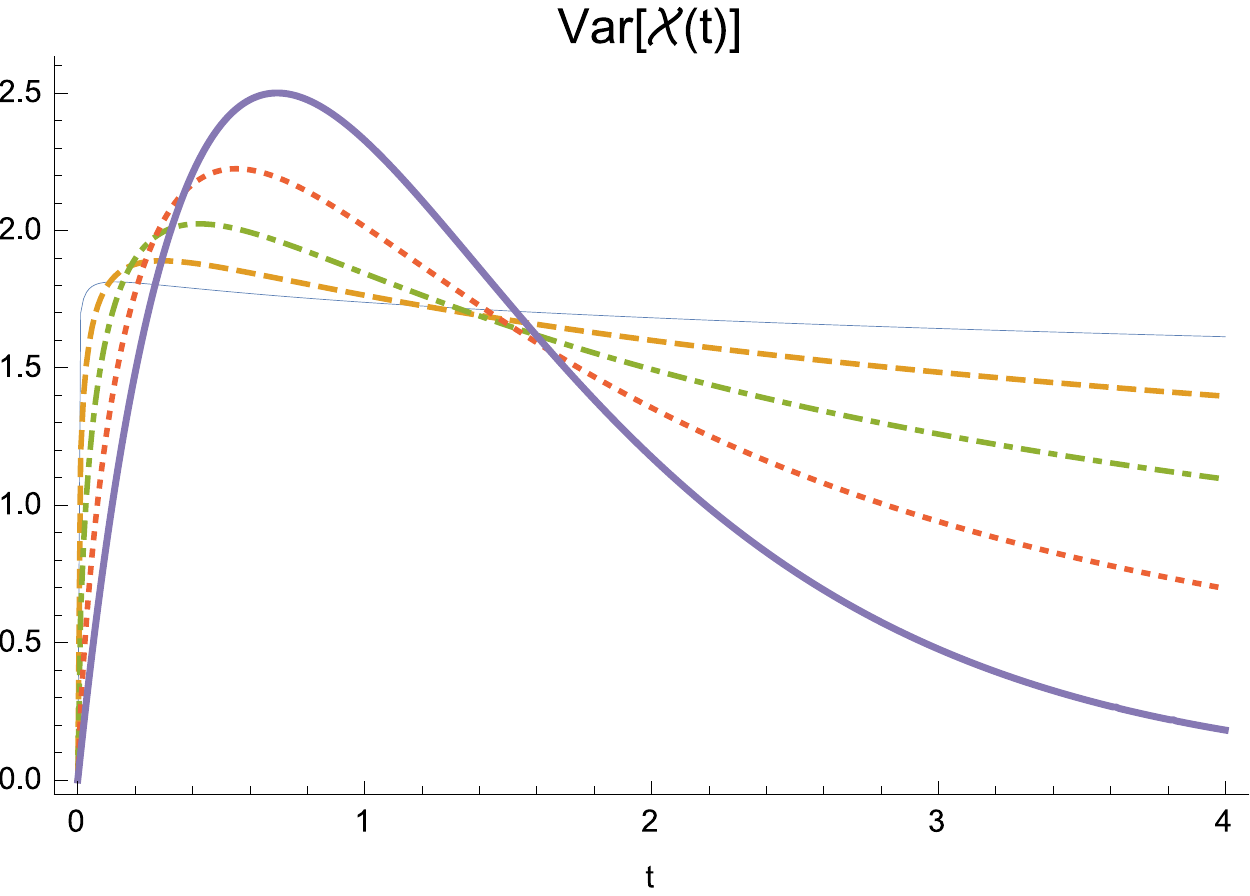}
				\end{minipage}
				\caption{\label{fig:var}Plots of $Var(\mathcal{X}(t))$ for $t\in [0,4]$ and
					$\beta$ from $0.2$  to $1$. The time-changed CSBP has initial condition
					$\mathcal{X}(0)=1$ a.s.\ and the pair of parameters $(b,\, \tilde \beta)$, clockwise from the upper-left, equal to $(1,\,0.1)$,
					$(1,\,0.5)$, $(1, \,10)$ and $(0,\,1)$, respectively.}
			\end{figure}

			Note that the Mittag--Leffler function is a generalization of the exponential function, with which it coincides for $\beta=1$.
			Comparing the moments of our generalized model, in~\eqref{formulamediaCSBPcomp} and~\eqref{momsecondoCSBPcomp}, to those of
			the CSBP in \eqref{momenti}, it is easy to see that the Mittag--Leffler function in the generalized case plays the same role
			as the exponential in the classical case. See in Figure~\ref{fig:media} and Figure~\ref{fig:var} the effect of the time-change
			on the mean and variance of the process $\mathcal{X}$.

		\subsection{Some examples}

			In the previous sections we have described in full generality the time-changed CSBP $\{\mathcal{X}(t)\}_{t\geq0}$,
			now let us focus on some specific cases of interest in order to better illustrate our framework.

			\subsubsection{Time-changed Feller branching diffusion}

				Consider the Feller branching diffusion \cite{Fel51} and recall that it is the diffusion process solving the SDE
				\begin{equation}
					\label{sdefeller}
					dX_t=-b X_t dt + \sqrt{2c X_t} dW_t
				\end{equation}
				where $W_t$ is a standard Brownian motion, $b$ $\in \mathbb{R}$ and $c>0$.
				This is the only diffusion process in the class of CSBPs and its corresponding Fokker--Plack equation is 
				\begin{equation*}
					\frac{\partial}{\partial t} p(y,t)=\frac{\partial }{\partial y} \left(byp(y,t)\right)
					+\frac{\partial^2}{\partial y^2}\left( cyp(y,t)\right).
				\end{equation*} 
				
				The scaling limit of GWPs that leads to Feller branching diffusion is well-known, see Pardoux \cite{Par08}
				for a nice review on it;
				this is one of the few cases in which this scaling scheme is known explicitly.

				Consider thus a time-changed Feller branching diffusion $\{\mathcal{X}(t)\}_{t\geq0}$ with stability parameter $\beta \in (0,1)$.
				Since the process $\{X(t)\}_{t\geq0}$ is a diffusion, its composition with $\{\mathcal{E}(t)\}_{t\geq0}$ fits in the
				framework of SDE driven by time-changed L\'evy processes, see \cite{HaKoUm12}. Therefore, it is possible to write an analogue
				of the Fokker--Planck equation solved by the marginal probability density function $m_x(y,t)$ of $\{\mathcal{X}(t)\}_{t\geq0}$.
				The following proposition shows that the equation involves D\v{z}rba\v{s}jan--Caputo derivatives of order $\beta\in(0,1)$,
				hence it classifies the time-changed Feller branching diffusion in the class of subdiffusions.

				\begin{prop}
					Let $\{\mathcal{X}(t)\}_{t\geq0}$ be a time-changed Feller branching diffusion with branching mechanism
					$\psi(u)=bu+cu^2$, for $b\in\mathbb{R}$ and $c>0$ and parameter $\beta \in (0,1)$. Let $\mathcal{X}(0)=x>0$ a.s.\
					and $m_x(y,t)$ be the marginal probability density function of $\mathcal{X}(t)$, for all $t\geq0$.
					Then $m_x(y,t)$ satisfies the equation
					\begin{equation*}
						\partial^{\beta}_t m_x(y,t)=\frac{\partial }{\partial y} \left(bym_x(y,t)\right)
						+\frac{\partial^2}{\partial y^2}\left( cym_x(y,t)\right),
					\end{equation*}
					where $\partial^{\beta}_t$ is the D\v{z}rba\v{s}jan--Caputo derivative.
				\end{prop}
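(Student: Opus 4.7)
The plan is to derive the fractional Fokker--Planck equation via the classical subordination argument, carried out in Laplace space. Since $\mathcal{E}$ is independent of $X$, the density of $\mathcal{X}(t)=X(\mathcal{E}(t))$ is given by
\[
m_x(y,t)=\int_0^\infty p(y,s)\,h(s,t)\,ds,
\]
where $p(y,s)$ is the transition density of the Feller branching diffusion starting at $x$. By the classical theory recalled around \eqref{sdefeller}, $p(y,s)$ satisfies the Fokker--Planck equation $\partial_s p(y,s)=\mathcal{A}^*p(y,s)$, with $\mathcal{A}^*f(y):=\partial_y(byf(y))+\partial_{yy}(cyf(y))$, and initial condition $p(y,0)=\delta(y-x)$.

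First I would take the Laplace transform of $m_x(y,t)$ in the time variable $t$ and use \eqref{laplace_h} to obtain
\[
\widetilde{m}_x(y,\mu)=\int_0^\infty p(y,s)\,\mu^{\beta-1}e^{-s\mu^\beta}\,ds=\mu^{\beta-1}\widetilde{p}(y,\mu^\beta),
\]
where $\widetilde{p}(y,\lambda)$ is the Laplace transform of $p(y,\cdot)$ in its time argument. Laplace-transforming the classical Fokker--Planck equation for $p$ at the point $\lambda=\mu^\beta$ gives
\[
\mu^\beta\widetilde{p}(y,\mu^\beta)-\delta(y-x)=\mathcal{A}^*\widetilde{p}(y,\mu^\beta).
\]
Multiplying both sides by $\mu^{\beta-1}$ and using the identity displayed above yields
\[
\mu^\beta\widetilde{m}_x(y,\mu)-\mu^{\beta-1}\delta(y-x)=\mathcal{A}^*\widetilde{m}_x(y,\mu).
\]
Since $\mathcal{X}(0)=x$ almost surely, $m_x(y,0^+)=\delta(y-x)$, and the left-hand side is precisely $\mathcal{L}_t\{\partial_t^\beta m_x(y,\cdot)\}(\mu)$ according to Definition~\ref{capu}. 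Inverting the Laplace transform produces the announced equation.

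As an alternative route (useful for intuition, even if I would rely on the Laplace argument in the proof), one can differentiate under the integral sign and use the governing equation $\partial_t^\beta h(u,t)=-\partial_u h(u,t)$ recalled in Section~\ref{waiting_times}, integrate by parts in $s$, and substitute the Fokker--Planck equation for $p$; the boundary term at $s=0$ is exactly what is absorbed by the initial data in the Caputo formulation, consistent with the Laplace-transform computation.

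The main obstacle is justifying the interchanges: differentiation under the integral defining $m_x$, the interchange of the Caputo integro-differential operator with the outer integral, and absolute convergence of the Laplace inversion. These are essentially technical and can be handled by invoking smoothness and decay of $p(y,s)$ in $s$ (standard for the Feller diffusion away from the boundary) together with the bounds on $h(s,t)$ and its derivative in $s$ implied by \eqref{laplace_h}; appealing to the framework of SDEs driven by time-changed Lévy processes of~\cite{HaKoUm12} cited in the text makes this rigorous without further work.
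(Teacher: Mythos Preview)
Your argument is correct. The Laplace-transform subordination computation you carry out is the standard route to fractional Fokker--Planck equations for processes of the form $X(\mathcal{E}(t))$, and each step---the identity $\widetilde{m}_x(y,\mu)=\mu^{\beta-1}\widetilde{p}(y,\mu^\beta)$ from \eqref{laplace_h}, the Laplace-transformed classical Fokker--Planck equation, and the identification of $\mu^\beta\widetilde{m}_x-\mu^{\beta-1}m_x(\cdot,0)$ with the transform of the Caputo derivative---is sound.

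The paper itself does not write out a proof of this proposition: it states the result and justifies it by remarking, just before the statement, that the composition $X(\mathcal{E}(t))$ falls within the framework of SDEs driven by time-changed L\'evy processes developed in~\cite{HaKoUm12}, from which the fractional Fokker--Planck equation follows. Your write-up is therefore more explicit than the paper, but ultimately rests on the same machinery: you invoke~\cite{HaKoUm12} at the end to handle the regularity and interchange issues, which is precisely what the paper does wholesale. The alternative route you sketch (applying $\partial_t^\beta h=-\partial_u h$ under the integral and integrating by parts) is also a valid and commonly used derivation and is equivalent to the Laplace argument.
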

				Moreover, note that it is possible to write explicitely the SDE solved by the process $\{\mathcal{X}(t)\}_{t\geq0}$.
				Let $\left(\Omega, \mathcal{F},\mathbb{G}=\{\mathcal{G}_t\}_{t\geq0},\mathbf{P}\right)$ be a filtered probability space
				and let $D=\{D(t)\}_{t\geq0}$ be a $\mathbb{G}$-adapted stable subordinator of parameter $\beta\in(0,1)$.
				Furthermore, let $\mathbb{F}=\{\mathcal{F}_t\}_{t\geq0}$
				be the filtration defined by means of time-change with the process $\{\mathcal{E}(t)\}_{t\geq0}$,
				inverse of $D$, such that, for all $t\geq0$, $\mathcal{F}_t=\mathcal{G}_{\mathcal{E}(t)}$
				(see \cite{MR542115}, page 312).
				Consider the filtered space $\left(\Omega, \mathcal{F},\mathbb{F},\mathbf{P}\right)$
				and suppose $\{X(t)\}_{t\geq0}$ is a $\mathbb{G}$-adapted
				Feller branching diffusion. Then the process $\{\mathcal{X}(t)\}_{t\geq0}$ is solution of the SDE
				\begin{equation*}
					d\mathcal{X}(t)=-b\mathcal{X}(t)d\mathcal{E}(t)+\sqrt{2c\mathcal{X}(t)}dW_{\mathcal{E}(t)},
				\end{equation*}
				where $\{W_{\mathcal{E}(t)}\}_{t\geq0}$ is an $\mathbb{F}$-adapted time-changed Brownian motion, 
				also known as grey Brownian motion.

			\subsubsection{Time-changed Yule process} 

				Let us consider a homogeneous Poisson process $\{Y(t)\}_{t\geq0}$ with rate $\theta>0$
				and shifted upwards by 1.
				By relation \eqref{exponent}, it is transformed into a CSBP $\{X(t)\}_{t\geq0}$ with Laplace exponent
				\begin{align*}
					\nu_t(\lambda)=\log(1-(1-e^{\lambda})e^{\theta t}), \qquad t\geq0 \: \lambda\geq 0.
				\end{align*}
				This is the Laplace exponent of a Yule process $\{X(t)\}_{t\geq0}$, that is a pure birth process with linear birth rate.
				If $X(0)$ is supported on the strictly positive integers, then the law of $X$ at every time $t\geq0$ is a probability measure
				$\{p(\cdot,t)\}$ satisfying
				\begin{align*}
					\frac{\partial}{\partial t} p(n,t)=\theta (n-1) p(n-1,t)-\theta n p(n,t), \qquad n \ge 1.
				\end{align*}
				The time-changed Yule process $\{\mathcal{X}(t)\}_{t\geq0}$ is studied in \cite{OrPo10}, where
				amongst other properties it is proved that for each $t\geq0$ its law is a probability measure
				$p_{\beta}(\cdot, t)$ that satisfies the time-fractional difference-differential equations
				\begin{align*}
					\partial^{\beta}_t p_{\beta}(n,t)=\theta (n-1) p_{\beta}(n-1,t)-\theta n p_{\beta}(n,t), \qquad n \ge 1,
				\end{align*}
				and whose explicit form is
				\begin{align*}
					p_{\beta}(n,t)=\sum_{j=1}^n\binom{n-1}{j-1} (-1)^{j-1}E_{\beta}(-\theta j t^{\beta}), \qquad n \ge 1,
				\end{align*}
				that is consistent with our results in Section~\ref{momenti_sec}.
				
	\subsubsection*{Acknowledgements} 
	
		F.\ Polito and L.\ Sacerdote have been supported by the projects \emph{Memory in Evolving Graphs}
		(Compagnia di San Paolo/Universit\`a di Torino) and by INDAM (GNAMPA/GNCS). F.\ Polito has also been supported
		by project \emph{Sviluppo e analisi di processi Markoviani e non Markoviani con applicazioni} (Universit\`a di Torino).
		L.\ Andreis has been partially supported by Centro Studi Levi Cases (Universit\`a di Padova).

	\bibliographystyle{abbrv} 
	\bibliography{paper}

\end{document}